
\documentclass{daj}

\dajAUTHORdetails{%
  title = {Sharp Density Bounds on the Finite Field Kakeya Problem}, 
  author = {Boris Bukh and Ting-Wei Chao},
  plaintextauthor = {Boris Bukh, Ting-Wei Chao},
    %
    %
  plaintexttitle = {Sharp Density Bounds on the Finite Field Kakeya Problem}, 
    %
    %
    %
   %
  keywords = {Kakeya problem, lines, directions.},
}   

\dajEDITORdetails{%
   year={2021},
   number={26},
   received={3 August 2021},   
   published={14 December 2021},  
   doi={10.19086/da.30707},       
}   

\usepackage{amsfonts, amsmath, amssymb}

\usepackage{bookmark}
\bookmarksetup{open,numbered,depth=5} 

\usepackage{amsthm}

\usepackage{tikz}
\usetikzlibrary{calc,patterns}
\usepackage{mathtools}

\usepackage[nameinlink]{cleveref}

\newtheorem{theorem}{Theorem}
\newtheorem{lemma}[theorem]{Lemma}

\makeatletter
\newcommand{\neutralize}[1]{\expandafter\let\csname c@#1\endcsname\count@}
\makeatother

\newenvironment{theoremprime}[1]
  {%
   \neutralize{theorem}\phantomsection
   \begin{theorem}}
  {\end{theorem}}

  \newtheorem{proposition}[theorem]{Proposition}

\crefname{claim}{Claim}{Claims}

\theoremstyle{definition}
\newtheorem{definition}[theorem]{Definition}

\newcommand*{\eqdef}{\stackrel{\mbox{\normalfont\tiny def}}{=}}   
\newcommand*{\Znneg}{\mathbb{Z}_{\geq 0}}                          
\newcommand*{\Rnneg}{\mathbb{R}_{\geq 0}}                          
\newcommand*{\E}{\mathbb{E}}                                     
\newcommand*{\Fq}{\mathbb{F}_q}                                  
\DeclarePairedDelimiter\abs{\lvert}{\rvert}                     

\newcommand*{\polytope}{\mskip1mu\tikz{\draw (0,0)--(0,1.5ex)-- (0.7ex,1.5ex) -- (1.5ex,0.7ex)--(1.5ex,0)--cycle;}\mskip1mu}
\newcommand*{\squarepolytope}{\mskip1mu\tikz{\draw (0,0)--(0,1.5ex)--(1.5ex,1.5ex)--(1.5ex,0)--cycle;}\mskip1mu}
\newcommand*{\simplex}{\mskip1mu\tikz{\draw (0,0)--(0.75ex,1.5ex)--(1.5ex,0)--cycle;}\mskip1mu}
\newcommand*{\simplexone}{\simplex_1}
\newcommand*{\simplextwo}{\simplex_2}


\DeclareMathOperator{\mult}{mult}                              
\DeclareMathOperator{\vol}{vol}                                
\DeclareMathOperator{\codim}{codim}                            

\newcommand*{\arXiv}[1]{\href{http://arxiv.org/pdf/#1}{arXiv:#1}}

\begin{document}

\begin{frontmatter}[classification=text]


\newcommand*\samethanks[1][\value{footnote}]{\footnotemark[#1]}
\author[pgom]{Boris Bukh\thanks{Supported in part by U.S.\ taxpayers through NSF CAREER grant DMS-1555149}}
\author[joha]{Ting-Wei Chao\samethanks}

\begin{abstract}
A \emph{Kakeya} set in $\Fq^n$ is a set containing a line in every direction. We
show that every Kakeya set in $\Fq^n$ has density at least $1/2^{n-1}$, matching
the construction by Dvir, Kopparty, Saraf and Sudan.
\end{abstract}
\end{frontmatter}

\section{Introduction}
\paragraph{Kakeya sets in finite fields.}
Let $\Fq$ be the finite field of size~$q$. A subset $K\subseteq \Fq^n$ is \emph{Kakeya} if, for every direction $b\in \Fq^n\setminus\{0\}$, 
  it contains a line of the form $\{a+bt : t\in \Fq\}$. In a 2008 breakthrough, Dvir \cite{dvir} proved that every Kakeya set in~$\Fq^n$
  is of size at least~$\frac{1}{n!}q^n$. This was the first lower bound of the order $\Omega_n(q^n)$.
  The constant $1/n!$ was improved to $c^{-n}$ for some $c\approx 2.5$ by Saraf and Sudan~\cite{saraf_sudan},
  who also presented a construction (due to Dvir, with modifications by themselves and Kopparty)
  of a Kakeya set of size $2^{-n+1}q^{n}+O_n(q^{n-1})$.
  The best general lower bound on the size of Kakeya sets, which was found shortly thereafter by Dvir, Kopparty, Saraf, and Sudan \cite{dkss},
  is
  \begin{equation}\label{eq:dkss}
    \abs{K}\geq (2-1/q)^{-n} q^n\qquad\text{for every Kakeya set }K\subseteq\Fq^n.
  \end{equation}
  Motivated by the applications to randomness extractors, they also proved an extension of this
  bound to sets obtained by replacing the notion of a `line' in the definition of the Kakeya
  set by that of an `algebraic curve of bounded degree'.

  The only result breaching the factor-of-two gap between the construction in \cite{saraf_sudan} and the lower bound in \cite{dkss}
  is that of Lund, Saraf and Wolf \cite{lund_saraf_wolf} who proved that $\abs{K}\geq 0.2107q^3$ for Kakeya sets in~$\Fq^3$.

\paragraph{Our result.}
  We improve the lower bound \eqref{eq:dkss} by a factor of $2-1/q$, thereby
  closing the factor-of-two gap in all dimensions.   
  \begin{theorem}\label{thm:high}
  The size of every Kakeya set $K\subseteq\Fq^n$ satisfies
  \[
    \abs{K}\geq (2-1/q)^{-(n-1)}q^n.
  \]
  \end{theorem}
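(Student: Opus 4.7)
The improvement from DKSS by a factor of $2-1/q$ corresponds, intuitively, to gaining one ``free'' dimension. My plan is to adapt the polynomial method with multiplicities of Dvir-Kopparty-Saraf-Sudan by refining the choice of polynomial space to capture this asymmetry, which matches the asymmetry present in the known Kakeya construction. I argue by contradiction, assuming $\abs{K} < (2-1/q)^{-(n-1)}q^n$.

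First, I would seek a nonzero polynomial $P \in \Fq[x_1, \ldots, x_n]$ with a prescribed degree constraint, vanishing on $K$ with multiplicity $m$. In DKSS one bounds the total degree by $d$, using the $\binom{d+n}{n}$-dimensional space and balancing $d$ against $\abs{K}\binom{m+n-1}{n}$; pushing $d/m$ up toward $q$ gives the factor $(2-1/q)^{-n}$. To gain the extra factor of $2-1/q$, I expect to replace the simplex support by a larger Newton polytope---for example, a cylinder-like region allowing higher degree in one distinguished variable---so that the monomial count is roughly $(2-1/q)$ times larger than the DKSS count, while its extreme monomials still satisfy the degree bounds needed for the line-restriction step.

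Second, for each direction $b \in \Fq^n \setminus \{0\}$, the restriction $P|_{L_b}$ to a line $L_b \subseteq K$ is a univariate polynomial vanishing with multiplicity $m$ at each of the $q$ points of $L_b$, hence divisible by $(t^q-t)^m$. When the effective degree along the line is less than $qm$, this forces $P|_{L_b} \equiv 0$, and extracting the top-degree coefficient of $P(a_b+bt)$ forces the leading form of $P$ to vanish at $b$. Iterating over all homogeneous components (or their analogues in the refined polynomial space) should yield $P \equiv 0$, contradicting nonzero existence.

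The main obstacle is Step~1: identifying the right polytope. It must have enough monomials to beat $\abs{K}\binom{m+n-1}{n}$ with the improved factor, yet the monomial of maximum degree along every line direction must still be pinned down by the Kakeya condition---otherwise the line-restriction argument in Step~2 weakens. Any enlargement of the polynomial space admits more candidate polynomials, but risks weakening the per-line constraints. Finding the right polytope---possibly by fixing a distinguished line in $K$ at the outset and using its direction to break symmetry---is the technical heart of the argument.
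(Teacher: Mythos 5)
Your Step~1 instinct is correct: the paper does replace the simplex by a cylinder-like Newton polytope, taking $V$ to be spanned by monomials $x^\alpha$ with $\abs{\alpha}<(2-1/q)rq$ and $\alpha_1+\dotsb+\alpha_{n-1}<rq$, so that the degree in $x_n$ is less constrained. But the rest of the proposal stays within the uniform-multiplicity DKSS framework, and that framework cannot be pushed to the stated bound by polytope enlargement alone. Here is the concrete obstruction. Suppose you require $P$ to vanish to a fixed order $m$ at every point of $K$. The line-restriction step yields, per direction $b$, not just $P_d(b)=0$ but (after applying Hasse derivatives) $P_d^{(i,0)}(b)=0$; however this needs $\deg P^{(i,0)}< q\cdot(m-\abs{i})$, i.e.\ $\deg P< qm-(q-1)\abs{i}$. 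The penalty $(q-1)\abs{i}$ is expensive: to push $\abs{i}$ up to $\sim r$ with $\deg P\approx(2-1/q)qr$ you must take $m\gtrsim(3-2/q)r$, and then the cost $\binom{m+n-1}{n}$ per point of $K$ grows by a factor of roughly $(3/2)^n$, wiping out the gain from the larger polytope. If instead you keep $m\approx(2-1/q)r$, you only obtain order-$1$ vanishing of the leading form at each direction, and the Schwartz--Zippel endgame (applied to $Q=P_d(x_1,\dotsc,x_{n-1},1)$ on $\Fq^{n-1}$, where the cylinder constraint caps $\deg Q<rq$) gives no contradiction for $r>1$.

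The missing ingredient is the line-dependent vanishing condition, which the paper imports from Zhang's multijoints argument. Rather than demanding full multiplicity $m$ at each $p\in K$, one asks, for each chosen line $\ell$ through $p$ and each multi-index $\abs{i}<r$ in the first $n-1$ variables, that $\mult_\ell\bigl(P^{(i,0)},p\bigr)\geq (2-1/q)r-\abs{i}/q$: the penalty for differentiating is $\abs{i}/q$, not $\abs{i}$, and the vanishing is only along $\ell$. This is precisely enough for the restriction of $P^{(i,0)}$ to each line to vanish identically, hence for $\mult(Q,b)\geq r$ on all of $\Fq^{n-1}$, which contradicts $\deg Q<rq$. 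The companion insight, which your proposal also does not anticipate, is the codimension bookkeeping (Lemma~\ref{lemma:Codim} in the paper): the conditions coming from different lines $\ell\ni p$ heavily overlap on monomials of degree $<(2-1/q)r$, so one bounds that contribution once by a monomial count, and only the ``high-degree'' conditions are charged per line. This is what makes the per-point cost $\bigl((2-1/q)^n + \abs{L_p}(n-1)(1-1/q)\bigr)r^n/n!$ rather than $\abs{L_p}\cdot\Theta(r^n)$, and summing against $\sum_p\abs{L_p}=q^n$ closes the computation. Your closing remark about ``fixing a distinguished line to break symmetry'' points in a different direction and, as far as I can see, does not recover either of these two ideas.

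A smaller point: the paper first reduces to \emph{almost Kakeya} sets (lines in every non-horizontal direction only), which is what makes the asymmetric polytope natural and lets Schwartz--Zippel be applied after dehomogenizing at $x_n=1$. Your proposal keeps all $q^n+q^{n-1}+\dotsb+1$ directions in play and never dehomogenizes, which would complicate the endgame even if the rest were repaired.
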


\paragraph{Paper organization.}
We begin by presenting a proof of a slightly weaker
bound in dimension $n=3$ in \Cref{sec:three}.
Though this proof does \emph{not} seem to generalize to the $n>3$,
it illustrates one of the ideas used in the general case.

After presenting the proof of \Cref{thm:high} in \Cref{sec:advanced},
we finish with a brief discussion of the remaining gap between our bound
and the known constructions in \Cref{sec:problem}.

\section{Simple argument in dimension \texorpdfstring{$3$}{3}}\label{sec:three}
In this section we shall prove the following.
\begin{theorem}\label{thm:three}
The size of every Kakeya set $K\subseteq \Fq^3$ satisfies $\abs{K}\geq \frac{1}{4}(q^3+q^2)$.
\end{theorem}

Let
\[
  A\eqdef \{(\alpha_1,\alpha_2,\alpha_3)\in\Znneg^3 : \alpha_1+\alpha_2+\alpha_3< 2q,\text{ and } \alpha_1, \alpha_2<q\},
\]
and consider the vector space of polynomials in $\Fq[x_1,x_2,x_3]$ whose monomials are indexed by $A$,
\[
  V\eqdef \Bigl\{\sum_{\alpha\in A} c_{\alpha} x^\alpha : c_\alpha\in \Fq\Bigr\},
\]
where $x^{\alpha}\eqdef x_1^{\alpha_1}x_2^{\alpha_2}x_3^{\alpha_3}$.
We say that a polynomial $P\in \Fq[x_1,x_2,x_3]$ vanishes at $p\in\Fq^3$ to order $2$ if
$P(p)=0$ and $\nabla P(p)=0$.

\begin{lemma}\label{lemma:3dim}
  Let $K$ be a Kakeya set in $\Fq^3$.
  If a polynomial $P\in V$ vanishes to order $2$ at every point of $K$, then $P$ is the zero polynomial.
\end{lemma}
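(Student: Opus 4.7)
The plan is to restrict $P$ to a line in each direction contained in $K$, use the order-$2$ vanishing plus a total degree count to force each restriction to be identically zero, and then harvest this into coefficient equations on $P$ that can be inverted thanks to the asymmetric bounds $\alpha_1,\alpha_2<q$ built into $A$.

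First, fix a direction $b\in\Fq^3\setminus\{0\}$ and let $\{a+bt:t\in\Fq\}\subseteq K$ be the line in direction $b$. Set $Q_b(t)\eqdef P(a+bt)\in\Fq[t]$. The chain rule gives $Q_b'(t_0)=b\cdot\nabla P(a+bt_0)$, so the hypothesis $P(a+bt_0)=0=\nabla P(a+bt_0)$ for every $t_0\in\Fq$ implies that $Q_b$ has a double root at every element of $\Fq$, hence $(t^q-t)^2\mid Q_b(t)$. Since every $\alpha\in A$ satisfies $\alpha_1+\alpha_2+\alpha_3<2q$, we have $\deg_t Q_b\leq 2q-1<2q$, and the divisibility forces $Q_b\equiv 0$.

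Next, I read off the leading coefficient. Suppose for contradiction that $P\neq0$ and let $D$ be the largest total degree of a nonzero monomial of $P$. Expanding
\[
  Q_b(t)=\sum_{\alpha\in A}c_\alpha\prod_{i=1}^{3}(a_i+b_it)^{\alpha_i},
\]
the coefficient of $t^D$ receives contributions only from monomials of top degree and equals $P_D(b)\eqdef\sum_{|\alpha|=D,\,\alpha\in A}c_\alpha b^\alpha$, which depends only on $b$, not on $a$. Since $Q_b\equiv0$ for every $b\neq 0$, the homogeneous polynomial $P_D$ vanishes on $\Fq^3\setminus\{0\}$, and trivially at the origin.

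Finally, I use the asymmetry in the definition of $A$. Specializing $b_3=1$, the polynomial $P_D(b_1,b_2,1)$ has individual degree less than $q$ in each of $b_1$ and $b_2$, and it vanishes on all of $\Fq^2$. A polynomial of individual degree $<q$ vanishing on $\Fq^2$ is the zero polynomial, so all its coefficients vanish; since for fixed $(\alpha_1,\alpha_2)$ the value $\alpha_3=D-\alpha_1-\alpha_2$ is uniquely determined, this forces $c_\alpha=0$ for every $\alpha\in A$ with $|\alpha|=D$, contradicting the choice of $D$. The delicate point---and the reason $V$ is defined asymmetrically---is precisely the last step: one needs the degrees in \emph{two} coordinates to be bounded by $q$ so that the homogeneous equation $P_D(b)=0$ on $\Fq^3\setminus\{0\}$ can be inverted into coefficient-wise vanishing via a single affine slice, while leaving the degree budget in the third coordinate large enough (up to $2q-1$) for the degree-count $\deg_t Q_b<2q$ to close.
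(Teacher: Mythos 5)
Your proof is correct and follows essentially the same route as the paper's: restrict $P$ to each line, use the degree bound $\deg P < 2q$ together with order-$2$ vanishing on all of $\Fq$ to kill the restriction, extract the top homogeneous component $P_D(b)=0$ for all $b\neq 0$, and then dehomogenize via $b_3=1$ where the individual-degree bounds $\alpha_1,\alpha_2<q$ force $P_D=0$. The only cosmetic differences are that you phrase the vanishing of the restriction via the divisibility $(t^q-t)^2\mid Q_b$ and spell out the chain rule, while the paper states the same fact directly; both arguments are sound over any $\Fq$.
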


Before proving the lemma, let us see how to derive \Cref{thm:three} from it. For any $p\in \Fq^3$,
the polynomials vanishing at $p$ to order $2$ form a subspace of codimension $4$ in~$V$. So,
the polynomials vanishing to order $2$ at all points of $K$ form a subspace of codimension at most $4\abs{K}$
in $V$. According to the lemma, the latter subspace is trivial, and so
\[
  4\abs{K}\geq \dim V=\abs{A}=\sum_{\alpha_1,\alpha_2=0}^{q-1}\bigl(2q-\alpha_1-\alpha_2\bigr)=q^3+q^2,
\]
as desired.

\begin{proof}[Proof of \Cref{lemma:3dim}]
  Assume that, on the contrary, $P\neq 0$, and write it as $P=P_0+P_1+\cdots+P_d$, where $P_k$ is the homogeneous component of degree $k$ and $d\eqdef \deg P$.
  Given a line $\ell=\{a +b t:t\in \mathbb{F}_q\}$ inside $K$, define the univariate polynomial $P_\ell (t)\eqdef P(a +b t)$.
  Since $P$ vanishes at every point of $\ell$ to order $2$, the polynomial $P_{\ell}$ vanishes at all points of~$\Fq$ to order~$2$.
  Because $\deg P_\ell\leq \deg P<2q$, this implies that $P_\ell$ is the zero polynomial.
  Since the coefficient of $t^d$ in $P_{\ell}$ is $P_d(b)$, it follows $P_d(b)=0$.

  Since $K$ is Kakeya, this means that $P_d(b)=0$ for every $b\neq 0$. In particular, $P_d(b_1,b_2,1)=0$ for all $b_1,b_2\in\Fq$.
  The polynomial $Q(x_1,x_2)\eqdef P_d(x_1,x_2,1)$ is of degree less than $q$ in each of~$x_1$ and~$x_2$.
  Write $Q$ as $Q(x_1,x_2)=\sum_{i=0}^{q-1} Q_i(x_2)x_1^i$ where $\deg Q_i<q$.
  Since $Q$ vanishes identically on $\Fq^2$, the polynomial $Q(x_1,c)\in \Fq[x_1]$ vanishes on $\Fq$, for every choice of $c\in \Fq$.
  As this polynomial is of degree less than $q$, this means that $Q_i(c)=0$ for every $i$ and every~$c$. Since
  $Q_i$'s are themselves of degree less than $q$, it follows that they are zero as polynomials, and so is~$Q$.
  Because $P_d$ is homogeneous and $Q(x_1,x_2)=P_d(x_1,x_2,1)$, this implies that $P_d$ is zero as well, contrary to $\deg P=d$.
\end{proof}

\section{Proof of \texorpdfstring{\Cref{thm:high}}{Theorem 1}}\label{sec:advanced}
\paragraph{Stronger result.}
The result we prove is in fact slightly stronger than \Cref{thm:high}.
We call any line of the form $\{a+bt: t\in\Fq\}$ with $b=(b_1,\dotsc,b_{n-1},1)$ \emph{non-horizontal}.
A set $K\subseteq \Fq^n$ is \emph{almost Kakeya} if it contains a line in every non-horizontal direction.
\begin{theoremprime}{thm:high}\label{thm:highstrong}
The size of every almost Kakeya set $K\subseteq \Fq^n$ satisfies $\abs{K}\geq (2-1/q)^{-(n-1)}q^n$.
\end{theoremprime}

\paragraph{Proof outline.} Like the proof of the case $n=3$, we shall use
polynomials built out of the monomials $x^{\alpha}$ in which the exponent of $x_n$
is less constrained than the exponents of $x_1,\dotsc,x_{n-1}$. As is common
in the other proofs in the area we shall use polynomials vanishing to
high order at points of $K$, and not merely to order $2$.
However, this is not enough to obtain the factor-of-two improvement we seek,
and to bridge the gap we use the idea of Ruixiang Zhang.
Like in his work on multijoints \cite{zhang}, our vanishing
conditions at a point $p\in K$ depend on the lines through~$p$.
The actual conditions are quite different from those in \cite{zhang} though. A similar general idea was used in \cite{yu_zhao} to obtain the sharp constant in the joints problem.

\paragraph{Hasse derivatives and high-order vanishing along lines.}
Over finite fields, all derivatives of order higher than the field's characteristic vanish.
The standard workaround is to employ Hasse derivatives, whose definition and properties we recall.
For more extensive discussion of Hasse derivatives, including proofs of their properties, see \cite[Section 2]{dkss}.

\begin{definition}[Hasse derivatives]
The \emph{Hasse derivatives} of a polynomial $P(x)\in\Fq [x_1,\dotsc,x_n]$ are the polynomials $P^{(i)}(x),i\in \Znneg^{n}$ such that
\[P(x+y)=\sum_{i}P^{(i)}(x)y^i.\]
We say that $P^{(i)}$ is the $i$-th Hasse derivative of~$P$.
\end{definition}
\begin{definition}[Multiplicities]
The \emph{multiplicity} $\mult (P,p)$ of a polynomial $P(x)\in\Fq [x_1,\dotsc,x_n]$ at point $p\in\Fq^n$ is the largest integer $m$ such that $P^{(i)}(p)=0$ for all $i=(i_1,\ldots,i_n)$ such that $\abs{i}\eqdef i_1+\dotsb+i_n<m$. We say that the polynomial $P$ vanishes to order $\mult (P,p)$ at~$p$.

Given a line $\ell=\{a+bt:t\in\Fq\}$ and a polynomial $P\in \Fq[x_1,\dotsc,x_n]$, we say
that $P$ vanishes to \emph{order $m$ at the point $p=a+bt_0$ along $\ell$}
if the univariate polynomial $P(a+bt)$ vanishes to order $m$ at $t=t_0$.
We write $\mult_{\ell}(P,p)$ for the order of vanishing
of $P$ along $\ell$ at the point~$p$.
\end{definition}
Note that, by the third part of the following proposition, $\mult_{\ell}(P,p)$ does not depend on the parameterization of the line $\ell$.

\begin{proposition}[Properties of Hasse derivatives]\label{prop:hasse}\
\begin{itemize}
\item The map $P\mapsto P^{(i)}$ is a linear operator on $\Fq [x_1,\dotsc,x_n]$, for every $i\in \Znneg^{n}$.
\item If we write the polynomial $P$ as $P(x)=\sum_{\alpha\in\mathbb{Z}_{\geq 0}^n}c_\alpha x^\alpha$, then the Hasse derivatives of $P$ are given by
\[P^{(i)}(x)=\sum_{\alpha}\binom{\alpha}{i}c_{\alpha} x^{\alpha-i},\]
where $\binom{\alpha}{i}\eqdef\prod_{k=1}^n\binom{\alpha_k}{i_k}$.
\item If $A$ is an invertible affine transformation of $\Fq^n$, then 
\[\mult(P\circ A,p)=\mult(P,Ap).\]
\end{itemize}
\end{proposition}
\begin{lemma}[Generalized Schwartz--Zippel lemma]\label{lemma:SZ}
If a non-zero polynomial $P(x)\in\Fq [x_1,\dotsc,x_n]$ vanishes to order at least $r$ at every point of $\Fq^n$,
then $\deg P\geq rq$.
\end{lemma}

\paragraph{Proof of \texorpdfstring{\Cref{thm:highstrong}}{Theorem 1'}.}
For a non-horizontal direction $b$, there might be several lines in direction $b$ contained in~$K$. We select one such
line for each $b$, and let $L$ be the resulting set of lines. Note that $\abs{L}=q^{n-1}$.
From now on we shall work exclusively with the lines in $L$, ignoring the other lines that might be contained in~$K$.
For each point $p\in K$, let $L_p\eqdef \{\ell \in L : \ell \ni p\}$ be the lines containing~$p$.

For each line $\ell\in L$, and each point $p$ of $\ell$, we shall
impose vanishing conditions at $p$ described in the following definition.

\begin{definition}
Let $\ell$ be a non-horizontal line, and $p\in \ell$.
We say that a polynomial \emph{$P$ vanishes to order $(r,r')$ at $p$ along $\ell$} if $\mult_{\ell}(P^{(i,0)},p)\geq r'-|i|/q$ for all $i\in\Znneg^{n-1}$ such that $|i|< r$.
\end{definition}
The definition is motivated by the proof in \cite{dkss}, which implicitly uses that $P$ vanishes to order
$(r,r)$ at $p$ along $\ell$ whenever $P$ vanishes to order $(2-1/q)r$ at~$p$.

Throughout this proof, the integer $r'$ will be equal to $r'=(2-1/q)r$, where $r$ is some multiple of~$q$,
which eventually will tend to infinity.

Let
\[
  A=\{\alpha \in\Znneg^n : \abs{\alpha}<(2-1/q)rq,\text{ and } \alpha_1+\dotsb+\alpha_{n-1}<rq\},
\]
and consider the vector space of polynomials with the monomials indexed by $A$,
\[
  V\eqdef \Bigl\{\sum_{\alpha\in A} c_{\alpha} x^\alpha : c_\alpha\in \Fq\Bigr\}.
\]

\begin{lemma}\label{lemma:ZeroPoly}
If $P\in V$ vanishes to order $\bigl(r,(2-1/q)r\bigr)$ at $p$ along $\ell$, for every $p\in K$ and every~$\ell\in L_p$, then $P$ is the zero polynomial.
\end{lemma}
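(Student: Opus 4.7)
The plan is to mirror the contradiction argument from \Cref{lemma:3dim}: assume $P \in V$ is nonzero, isolate the top-degree homogeneous component $P_D$ where $D \eqdef \deg P$, and derive a contradiction by combining a univariate degree bound along each chosen line with the generalized Schwartz--Zippel lemma applied transversely to the lines.

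First, for each $b' \in \Fq^{n-1}$ I would fix the chosen line $\ell = \{a + bt : t \in \Fq\} \in L$ of direction $b = (b',1)$ and, for each $i \in \Znneg^{n-1}$ with $\abs{i} < r$, study the univariate polynomial $P^{(i,0)}(a+bt)$. The hypothesis makes it vanish to order at least $(2-1/q)r - \abs{i}/q$ at every $t \in \Fq$, for a total vanishing count of at least $q(2-1/q)r - \abs{i} = (2q-1)r - \abs{i}$. Since $\deg P < (2-1/q)rq = (2q-1)r$, the degree of this univariate polynomial in $t$ is at most $(2q-1)r - 1 - \abs{i}$, strictly smaller than its total vanishing count, so $P^{(i,0)}(a + bt) \equiv 0$ as a polynomial in $t$.

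Next, I would extract the coefficient of $t^{D-\abs{i}}$ from this identity: all lower homogeneous components $P_k^{(i,0)}$ with $k < D$ have degree at most $k - \abs{i} < D - \abs{i}$, so only $P_D$ contributes, yielding $P_D^{(i,0)}(b',1) = 0$ for every $b' \in \Fq^{n-1}$ and every $\abs{i} < r$. Setting $Q(y_1, \ldots, y_{n-1}) \eqdef P_D(y_1, \ldots, y_{n-1}, 1)$, the identity $Q^{(i)}(b') = P_D^{(i,0)}(b',1)$ (a direct expansion using $\alpha_n = D - \alpha_1 - \cdots - \alpha_{n-1}$) translates the previous line into $\mult(Q, b') \geq r$ at every point of $\Fq^{n-1}$.

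Finally, if $Q \neq 0$, the generalized Schwartz--Zippel lemma with $S = \Fq$ gives $r q^{n-1} \leq (\deg Q)\, q^{n-2}$, that is, $\deg Q \geq rq$; but the constraint $\alpha_1 + \cdots + \alpha_{n-1} < rq$ in the definition of $A$ forces $\deg Q < rq$, a contradiction. Hence $Q = 0$, and since $P_D$ is homogeneous of degree $D$ the tuple $(\alpha_1, \ldots, \alpha_{n-1})$ determines $\alpha_n$, so dehomogenization is injective on the monomial support of $P_D$ and $Q = 0$ forces $P_D = 0$, contradicting $D = \deg P$. The main obstacle will be purely bookkeeping: the two strict inequalities $(2q-1)r - \abs{i} > \deg_t P^{(i,0)}(a+bt)$ and $rq > \deg Q$ are precisely the margins that the definitions of $V$ and of the $\bigl(r,(2-1/q)r\bigr)$-vanishing condition are engineered to provide, and one must verify they line up sharply.
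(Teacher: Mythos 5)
Your proposal is correct and follows essentially the same route as the paper's proof: restrict $P^{(i,0)}$ to each chosen line, compare degree against the total vanishing order to conclude the restriction is the zero polynomial, extract the top coefficient to get $P_D^{(i,0)}(b',1)=0$, dehomogenize, and apply the generalized Schwartz--Zippel lemma. The only cosmetic difference is that the paper tracks $\lfloor |i|/q\rfloor$ explicitly while you use the real-valued lower bound $(2-1/q)r-|i|/q$ directly; both give the same conclusion since the multiplicity is an integer.
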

\begin{proof}
  Assume that, on the contrary, $P$ is not the zero polynomial.
  Let $\ell\in L$ be an arbitrary line, and $\abs{i}<r$. By the lemma's assumption, $P^{(i,0)}$ vanishes to order $(2-1/q)r-\lfloor |i|/q \rfloor$ at $p$ along $\ell$.
  Since $\deg P^{(i,0)}< (2-1/q)rq-|i|\leq q\big((2-1/q)r-\lfloor |i|/q\rfloor\big)\leq \sum_{p\in\ell}\mult_\ell(P^{(i,0)},p)$. It follows from the one-dimensional case of \Cref{lemma:SZ} that the univariate polynomial obtained
  by restricting $P^{(i,0)}$ to the line $\ell$ is the zero polynomial.

  Write $P=P_0+P_1+\dotsb+P_d$ where $P_k$ is the degree-$k$ homogeneous component of $P$ and $d\eqdef\nobreak \deg P$,
  and also write the line $\ell$ as $\ell=\{a+bt:t\in\Fq\}$, where $b=(b_1,\dotsc,b_{n-1},1)$.
  Note that, by the second part of \Cref{prop:hasse}, $P_d^{(i,0)}$ is the homogeneous part of $P^{(i,0)}$ of degree $d-\abs{i}$, and $\deg P^{(i,0)}\leq d-|i|$. Thus, $P_d^{(i,0)}(b)$ is the coefficient of $t^{d-\abs{i}}$ in the univariate polynomial $P^{(i,0)}(a+bt)$. Therefore, $P_d^{(i,0)}(b)=0$ for all $\abs{i}< r$ and for all~$b=(b_1,\dotsc,b_{n-1},1)$.

  Define the polynomial $Q\in\Fq[x_1,\dotsc,x_{n-1}]$ by
  $Q(x_1,\dotsc,x_{n-1})\eqdef P_d(x_1,\dotsc,x_{n-1},1)$. Note that $Q\neq 0$  since $P_d\neq 0$ and $P_d$ is homogeneous. For every $b\in \Fq^{n-1}$ and every $i\in\mathbb{Z}_{\geq 0}^{n-1}$ such that $\abs{i}<r$,
  we have \break$Q^{(i)}(b)=P^{(i,0)}(b,1)=0$. Hence, $\mult(Q,b)\geq r$ for all $b\in\Fq^{n-1}$, and so the generalized Schwartz--Zippel lemma implies that $\deg Q\geq rq$, which contradicts
  the definition of the space~$V$.
\end{proof}

The next task is to estimate the number of independent linear conditions that different lines
impose at the point~$p$. To that end, for any $p\in K$ and line $\ell\in L_p$, we set $W_{p,\ell}\subseteq \Fq[x_1,\dotsc,x_n]$ to be the subspace consisting of all polynomials vanishing to order $\bigl(r,(2-1/q)r\bigr)$ at $p$ along $\ell$.

\begin{lemma}\label{lemma:Codim}
  Let $p\in K$ be arbitrary.
Then the codimension of $W_p\eqdef \bigcap_{\ell\in L_p} W_{p,\ell}$ in $\Fq[x_1,x_2,\dotsc,x_n]$ satisfies 
\[
\codim W_p\leq \bigl((2-1/q)^n+m(n-1)(1-1/q)\bigr)\frac{r^n}{n!}+O_{n,q}(r^{n-1}),
\]
where $m=|L_p|$.
\end{lemma}

\begin{proof}
  Suppose $\ell=\{a+bt:t\in\Fq\}$ and $p=a+bt_0$ is a point on $\ell$.
  For $i\in \Znneg^{n-1}$ and $j\in \Znneg$, write $W_{p,\ell}(i,j)$ for the space of polynomials $P$ such that
  the univariate polynomial $Q=P^{(i,0)}(a+bt)$ satisfies $Q^{(j)}(t_0)=0$.
  Note that $W_{p,\ell}(i,j)$ is a subspace of codimension $1$.
  Since $\bigcap_{j<r'} W_{p,\ell}(i,j)$ consists of polynomials $P$ satisfying $\mult_{\ell}(P^{(i,0)},p)\geq r'$,
  it follows that
  \[
    W_{p,\ell}=\bigcap_{\substack{\abs{i}<r\\j<(2-1/q)r-\abs{i}/q}} W_{p,\ell}(i,j).
  \]

  For the purpose of proving the lemma, we may assume that $p$ is the origin and so $\ell=\{bt : t\in\Fq\}$.
  Given a polynomial $P$, write it as $P(x)=\sum_\alpha c_\alpha x^\alpha$. 
  Observe that, by the second part of \Cref{prop:hasse}, the $t^{j-|i|}$ coefficient of $P^{(i,0)}(bt)$ is a linear combination of the coefficients $c_\alpha$ whose $\alpha=\abs{i}+j$. Thus, the linear condition $P\in W_{p,\ell}(i,j)$ involves only such $c_\alpha$.

  We use this observation to separate the linear conditions into those affecting coefficients of degree $\abs{\alpha}<(2-1/q)r$
  from the rest. To that end we define
  \begin{equation}\label{eq:wpsep}
    W_{p,\ell}^{-}\eqdef \bigcap_{\substack{\abs{i}<r\\j<(2-1/q)r-\abs{i}/q\\\abs{i}+j<(2-1/q)r}} W_{p,\ell}(i,j)\qquad\text{and}\qquad
    W_{p,\ell}^{+}\eqdef \bigcap_{\substack{\abs{i}<r\\j<(2-1/q)r-\abs{i}/q\\\abs{i}+j\geq (2-1/q)r}} W_{p,\ell}(i,j).
  \end{equation}
  Set $W_p^{-}\eqdef \bigcap_{\ell\in L_p} W_{p,\ell}^{-}$ and $W_p^{+}\eqdef \bigcap_{\ell\in L_p} W_{p,\ell}^{+}$. Clearly,
  \begin{equation}\label{eq:wpadditive}
    \codim W_p=\codim W_p^{-}+\codim W_p^{+}.
  \end{equation}
    
  The $\codim W_p^{-}$ is easy to bound: the linear conditions in the definition of $W_p^{-}$ involve only
  the coefficients $c_{\alpha}$ of monomials of degree $\abs{\alpha}<(2-1/q)r$, and so we may bound $\codim W_p^{-}$
  by the number of such monomials, i.e.,
  \begin{equation}\label{eq:wpminus}
    \codim W_p^{-}\leq \binom{(2-1/q)r+n-1}{n}=\frac{\bigl((2-1/q)r\bigr)^n}{n!}+O_{n,q}(r^{n-1}).
  \end{equation}

  To estimate $\codim W_p^+$, we upper bound $\codim W_{p,\ell}^{+}$, for each $\ell\in L_p$, by the number of linear conditions appearing in \eqref{eq:wpsep}, i.e., 
  \begin{align}\label{eq:wpplusbound}
    \codim W_p^{+}&\leq m\cdot \abs[\big]{\bigl\{(i,j)\in \Znneg^{n-1}\times \Znneg :\abs{i}<r, j<(2-1/q)r-\abs{i}/q, \abs{i}+j\geq (2-1/q)r\bigr\}}.\\
  \intertext{Define the polytope}
    \polytope &\eqdef \bigl\{(i,j)\in \Rnneg^{n-1}\times \Rnneg :\abs{i}<1, j<(2-1/q)-\abs{i}/q,\, \abs{i}+j\geq 2-1/q\bigr\}.
  \end{align}
  Since the set on the right side of \eqref{eq:wpplusbound} is the set of lattice points in $r\cdot \polytope$, we may
  write the bound \eqref{eq:wpplusbound} as $\codim W_p^+\leq m\cdot r^n\vol (\polytope) + O_{n,q}(r^{n-1})$.

  The $\polytope$ can be expressed as a Boolean combination of three simpler polytopes,
  \begin{align*}
    \squarepolytope\,&\eqdef \bigl\{(i,j)\in\mathbb{R}_{\geq 0}^{n-1}\times\mathbb{R}_{\geq 0}:\abs{i}<1, 1-1/q<j\leq 2-2/q\bigr\},\\
    \simplexone&\eqdef \bigl\{(i,j)\in\mathbb{R}_{\geq 0}^{n-1}\times\mathbb{R}_{\geq 0}:\abs{i}< 1, j> 1-1/q, \abs{i}+j< 2-1/q\},\\
    \simplextwo&\eqdef \{(i,j)\in\mathbb{R}_{\geq 0}^{n-1}\times\mathbb{R}_{\geq 0}:\abs{i}< 1,2-2/q< j< (2-1/q)-\abs{i}/q \}.
  \end{align*}
  The $\simplexone$ and $\simplextwo$ are $n$-simplices, whereas $\squarepolytope$ is a cylinder over an $(n-1)$-simplex. We can depict
  these polytopes as follows.
\begin{center}
\begin{tikzpicture}[polytope/.style={thick,line join=bevel},polytope label/.style={shape=circle,inner sep=1pt,fill=white,minimum size=1.4em}]
  \def\smidgen{0.3333}             
  \def\axisheight{2.2}
  \def\axiswidth{1.6}
  \def\bottomy{0.2}
  \def\scalefactor{1.76}
  \def\drawaxes{%
    \draw[<->] (0,\axisheight) -- (0,\bottomy)--(\axiswidth,\bottomy);
    \node[anchor=east] at (0,2-\smidgen) {$\scriptstyle 2-1/q$};
    \node[anchor=east] at (0,1-\smidgen) {$\scriptstyle 1-1/q$};
    \node[anchor=east] at (0,\axisheight) {$\scriptstyle j$};
    \node[anchor=north] at (1,\bottomy) {$\scriptstyle 1$};
    \node[anchor=north] at (\axiswidth,\bottomy) {$\scriptstyle \abs{i}$};
    \draw[dotted] (1,\bottomy) -- (1,1-\smidgen);
  }
  \begin{scope}[scale=\scalefactor]
    \drawaxes;
    \draw[polytope,pattern=north east lines] (0,2-\smidgen) -- (1,2-2*\smidgen) -- (1,1-\smidgen)  -- cycle;
    \draw[polytope,pattern=vertical lines] (0,2-\smidgen) -- (1,1-\smidgen) -- (0,1-\smidgen) -- cycle;
    \node[polytope label] at (0.79,1.52-\smidgen) {$\polytope$};
    \node[polytope label] at (0.3,1.3-\smidgen) {$\simplexone$};
  \end{scope}
  \begin{scope}[xshift=6cm,scale=\scalefactor]
    \drawaxes;
    \node[anchor=east] at (0,2-2*\smidgen) {$\scriptstyle 2-2/q$};    
    \draw[polytope,pattern=north east lines] (0,2-\smidgen) -- (1,2-2*\smidgen) -- (0,2-2*\smidgen)  -- cycle;
    \draw[polytope,pattern=north west lines] (0,2-2*\smidgen) -- (1,2-2*\smidgen) -- (1,1-\smidgen) -- (0,1-\smidgen) -- cycle;
    \node[polytope label] (simplextwo) at (0.8,2.2-\smidgen) {$\simplextwo$};
    \node[polytope label] at (0.5,1.3-\smidgen) {$\squarepolytope$};
    \draw[->] (simplextwo) -- (0.45,1.59);
  \end{scope}
\end{tikzpicture}
\end{center}

Since $\polytope+\simplexone=\squarepolytope+\simplextwo$, with pluses denoting disjoint unions, it follows that
\begin{align*}
  \vol(\polytope)&=\vol(\squarepolytope)-\vol(\simplexone)+\vol(\simplextwo)\\
  &=\tfrac{1-1/q}{(n-1)!}-\tfrac{1}{n!}+\tfrac{1}{n!q}\\
  &=\tfrac{(n-1)(1-1/q)}{n!}.
\end{align*}

We thus obtain
\begin{equation}\label{eq:wpplusfinal}
  \codim W_p^{+}\leq m\tfrac{(n-1)(1-1/q)}{n!}r^n+O_{n,q}(r^{n-1}).
\end{equation}
Combining \eqref{eq:wpadditive}, \eqref{eq:wpminus} and \eqref{eq:wpplusfinal} we obtain the desired result.
\end{proof}

To get a lower bound on $\abs{K}$ it remains to estimate $\dim V$. On one hand, it is equal to
\begin{align}
\dim V=\abs{A}=&\sum_{\alpha_n=0}^{(1-1/q)rq-1}\binom{rq+n-2}{n-1}+\sum_{\alpha_n=(1-1/q)rq}^{(2-1/q)rq-1}\binom{(2-1/q)rq-\alpha_n+n-2}{n-1} \notag\\
=&(1-1/q)rq\binom{rq+n-2}{n-1}+\binom{rq+n-1}{n} \notag\\
=&\frac{(1-1/q)r^nq^n}{(n-1)!}+\frac{r^nq^n}{n!}-O_{n,q}(r^{n-1}).\label{eq:vasympt}
\end{align}
On the other hand, from \Cref{lemma:ZeroPoly} we know that $\bigcap_{p\in K}W_p\cap V=\{0\}$, and hence
\begin{align*}
  \dim V&\leq \codim \bigcap_{p\in K}W_p\leq \sum_{p\in K}\codim W_p\\&\leq \sum_{p\in K}\bigl((2-1/q)^n+\abs{L_p}(n-1)(1-1/q)\bigr)\frac{r^n}{n!}+O_{n,q}(r^{n-1})\qquad\text{ by \Cref{lemma:Codim}}.
\end{align*}
The sum above can be computed by noting that $\sum \abs{L_p}=q\abs{L}=q^n$. We then let $r\to\infty$ and compare the resulting upper bound on $\dim V$ with the asymptotics in \eqref{eq:vasympt}
to obtain
\[
 n(1-1/q)q^n+q^n\leq \abs{K}(2-1/q)^n+q^n(n-1)(1-1/q).
\]
By rearranging the inequality, we see that this is equivalent to
\[
  \abs{K}\geq \frac{q^n}{(2-1/q)^{n-1}}.
\]

\section{Lower-order terms}\label{sec:problem}
In dimension $2$, one can use
the simple fact that two distinct lines intersect at most
once to derive the lower bound of $\abs{K}\geq \frac{q(q+1)}{2}$
for every $2$-dimensional Kakeya set. Though this bound is sharp
for even $q$, the sharp bound for odd $q$ is $\abs{K}\geq \frac{q(q+1)}{2}+\frac{q-1}{2}$,
as shown by Blokhuis and Mazzocca \cite{blokhuis_mazzocca}.

Recall that we defined a set to be \emph{almost Kakeya} if it contains a line in every non-horizontal direction.
The paper \cite{saraf_sudan} presents two constructions of higher-dimensional Kakeya sets, one construction 
for each possible parity of~$q$. The construction for odd $q$ (due to Dvir) relies on an auxiliary construction
of almost Kakeya sets, whereas the construction for even $q$ (due to Kopparty, Saraf, and Sudan)
is direct.

For odd $q$, the almost Kakeya sets in \cite{saraf_sudan} are of size $q\cdot (\frac{q+1}{2})^{n-1}$.
For large $q$, this quantity is $2^{-n+1}q^n\bigl(1+\frac{n-1}{q}+O_n(q^{-2})\bigr)$ whereas
the lower bound in \Cref{thm:high} is $2^{-n+1}q^n\bigl(1+\frac{n-1}{2q}+O_n(q^{-2})\bigr)$.
It would be interesting to close the gap.

To turn an almost Kakeya set into a genuine Kakeya set, one must
take care of the horizontal directions. In \cite{saraf_sudan} this was
achieved by adding a horizontal hyperplane. This can be done
more efficiently by adding a lower-dimensional Kakeya set instead.
\begin{proposition}
  There is a Kakeya set in $\Fq^n$ of size $2^{-n+1}q^n\bigl(1+\tfrac{n+1-2^{-n+2}}{q}+O_n(q^{-2})\bigr)$ if $q$ is odd.
\end{proposition}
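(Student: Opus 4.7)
I will argue by induction on $n$. The base case $n=1$ is trivial: $K=\Fq$ has size $q$, matching the claimed bound. For the inductive step, suppose we have a Kakeya set $K'\subseteq\Fq^{n-1}$ of the size claimed for dimension $n-1$, and let $A\subseteq\Fq^n$ be the almost Kakeya set of \cite{saraf_sudan} of size $|A|=q\bigl(\tfrac{q+1}{2}\bigr)^{n-1}$. For any $v\in\Fq^n$, set
\[
  K\eqdef (A+v)\cup(K'\times\{0\}),
\]
where $K'\times\{0\}\subseteq\Fq^n$ is the embedding into the horizontal hyperplane $\{x_n=0\}$. Since a translate of an almost Kakeya set remains almost Kakeya, $A+v$ contains a line in every non-horizontal direction, and $K'\times\{0\}$ contains a line in every horizontal direction, so $K$ is indeed a Kakeya set regardless of the choice of $v$.

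It remains to choose $v$ to minimize $|K|=|A|+|K'|-\bigl|(A+v)\cap(K'\times\{0\})\bigr|$. For uniform random $v\in\Fq^n$, the expected value of the overlap equals $|A|\,|K'|/q^n$, so some $v$ realizes at least this overlap. Using the trivial lower bound $|A|\geq 2^{-n+1}q^n$ together with $|K'|\geq q^{n-1}/(2-1/q)^{n-2}\geq 2^{-n+2}q^{n-1}$ supplied by \Cref{thm:high} in dimension $n-1$, the expectation is at least $2^{-2n+3}q^{n-1}$. Consequently we may choose $v$ so that
\[
  |K|\leq|A|+|K'|-2^{-2n+3}q^{n-1}.
\]

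Expanding $|A|=2^{-n+1}q^n+2^{-n+1}(n-1)q^{n-1}+O_n(q^{n-2})$ and using the inductive bound on $|K'|$, the coefficient of $q^{n-1}$ in $|K|$ becomes
\[
  2^{-n+1}(n-1)+2^{-n+2}-2^{-2n+3}=2^{-n+1}\bigl(n+1-2^{-n+2}\bigr),
\]
which is exactly the target. The main obstacle is purely bookkeeping: one must track the $O_n(q^{n-2})$ error terms carefully through the induction to ensure they do not corrupt the extracted coefficient of $q^{n-1}$.
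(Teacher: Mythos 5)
Your proposal is correct and takes essentially the same approach as the paper: an inductive construction that unions the almost Kakeya set of \cite{saraf_sudan} with a translated copy of the lower-dimensional Kakeya set, choosing the translate by averaging so that the overlap is at least its expectation. The only (cosmetic) difference is that the paper computes $\E\bigl[\abs{K_n}\bigr]=\abs{K_n'}+\abs{K_{n-1}}\bigl(1-\abs{K_n'}/q^n\bigr)$ exactly and expands, whereas you lower-bound the overlap term separately; both yield the same coefficient of $q^{n-1}$.
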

\begin{proof}
  By induction on $n$, with the base case $n=1$ being trivial. Let $K_n'$ be an
  almost Kakeya set in $\Fq^n$ of size $q\cdot (\frac{q+1}{2})^{n-1}$.
  Let $K_{n-1}$ be the inductively-constructed $(n-1)$-dimensional Kakeya set
  of size $2^{-n+2}q^{n-1}+O_n(q^{n-2})$. We think of $K_{n-1}$ as lying inside a horizontal
  hyperplane in $\Fq^n$. Then the set $K_n\eqdef K_n'\cup (K_{n-1}+x)$ is a Kakeya set
  for any choice of $x\in \Fq^n$. For a random choice of $x$, 
  \[
  \E\bigl[\abs{K_n}\bigr]=\abs{K_n'}+\abs{K_{n-1}}\left(1-\frac{\abs{K_n'}}{q^n}\right)=2^{-n+1}q^n\bigl(1+\tfrac{n+1-2^{-n+2}}{q}+O_n(q^{-2})\bigr).\qedhere
  \]
\end{proof}

For even values of $q$, the Kakeya sets constructed in \cite{saraf_sudan} are of size 
$2^{-n+1}q^n+(1-2^{-n+1})q^{n-1}$.

\section*{Acknowledgments} 
Throughout this work we benefited from discussions with N\'ora Frankl.
We are thankful to the anonymous referee for very astute and useful comments.

\bibliographystyle{amsplain}


\begin{dajauthors}
\begin{authorinfo}[pgom]
  Boris Bukh\\
  Department of Mathematical Sciences, Carnegie Mellon University\\
  bbukh\imageat{}math\imagedot{}cmu\imagedot{}edu \\
  \url{http://www.borisbukh.org/}
\end{authorinfo}
\begin{authorinfo}[joha]
  Ting-Wei Chao\\
  Department of Mathematical Sciences, Carnegie Mellon University\\
  tchao2\imageat{}andrew\imagedot{}cmu\imagedot{}edu
\end{authorinfo}
\end{dajauthors}

\end{document}